\newtheorem{theorem}{Theorem}[section]
\newtheorem{lemma}[theorem]{Lemma}
\theoremstyle{definition}
\newtheorem{definition}[theorem]{Definition}
\newtheorem{example}[theorem]{Example}
\theoremstyle{remark}
\newtheorem{remark}[theorem]{Remark}
\begin{document}

\title[One--point extensions and local topological properties]{One--point extensions and local topological properties}

\author{M.R. Koushesh}
\address{Department of Mathematical Sciences, Isfahan University of Technology, Isfahan 84156--83111, Iran}
\address{School of Mathematics, Institute for Research in Fundamental Sciences (IPM), P.O. Box: 19395--5746, Tehran, Iran}
\email{koushesh@cc.iut.ac.ir}
\thanks{This research was in part supported by a grant from IPM (No. 90030052).}

\subjclass[2010]{54D35}

\keywords{Stone--\v{C}ech compactification, one--point extension, one--point compactification, Mr\'{o}wka's condition $(\mbox{W})$.}

\begin{abstract}
A space $Y$ is called an {\em extension} of a space $X$ if $Y$ contains $X$ as a dense subspace. An extension $Y$ of $X$
is called a {\em one--point extension} of $X$ if $Y\backslash X$ is a singleton. P. Alexandroff proved that any locally compact non--compact Hausdorff space $X$ has a one--point compact Hausdorff extension, called the {\em one--point compactification} of $X$. Motivated by this, S. Mr\'{o}wka and J.H. Tsai [On local topological properties. II, \emph{Bull. Acad. Polon. Sci. S\'{e}r. Sci. Math. Astronom. Phys.} {\bf 19} (1971), 1035--1040] posed the following more general question: For what pairs of topological properties ${\mathscr P}$ and ${\mathscr Q}$ does a locally--${\mathscr P}$ space $X$ having ${\mathscr Q}$ possess a one--point extension having both ${\mathscr P}$ and ${\mathscr Q}$? Here, we provide an answer to this old question.
\end{abstract}

\maketitle

\section{Introduction}

Let ${\mathscr P}$ be a topological property. Then
\begin{itemize}
  \item ${\mathscr P}$ is {\em closed  hereditary} if any closed subspace of a space with ${\mathscr P}$, also has ${\mathscr P}$.
  \item ${\mathscr P}$ is {\em preserved under finite closed sums} if any space which is expressible as a finite union of closed subspaces each having ${\mathscr P}$, also has ${\mathscr P}$.
  \item ${\mathscr P}$ {\em satisfies Mr\'{o}wka's condition $(\mbox{\em W})$} if it satisfies the following: If $X$ is a completely regular space in which there exists a point  $p$ with an open  base ${\mathscr B}$ for $X$ at $p$ such that $X\backslash  B$ has ${\mathscr P}$ for any $B\in {\mathscr B}$, then $X$ has ${\mathscr P}$. (See \cite{Mr}.)
\end{itemize}

\begin{remark}
If ${\mathscr P}$ is a topological property which is closed hereditary and productive then Mr\'{o}wka's condition $(\mbox{W})$ is equivalent to the following condition: If a completely regular space $X$ is the union of a compact space and a space with ${\mathscr P}$, then $X$ has ${\mathscr P}$. (See \cite{MRW1}.)
\end{remark}

Let $X$ be a space and let ${\mathscr P}$ be a topological property. The space $X$ is called {\em locally--${\mathscr P}$} if each of its points has a neighborhood in $X$ with ${\mathscr P}$. Note that if $X$ is regular and ${\mathscr P}$ is closed hereditary, then $X$ is locally--${\mathscr P}$ if and only if each $x\in X$ has an open neighborhood $U$ in $X$ such that $\mbox{cl}_XU$ has ${\mathscr P}$.

Let $X$ and $E$ be Hausdorff spaces. The space $X$ is said to be {\em $E$--completely regular} if $X$ is homeomorphic to a subspace of a product $E^\alpha$ for some cardinal $\alpha$. (See \cite{EM} and \cite{Mr1}.) In \cite{MT} (see also \cite{T}) the authors proved that for a topological property ${\mathscr P}$ which is regular--closed hereditary and preserved under finite closed sums, and satisfies Mr\'{o}wka's condition $(\mbox{W})$, every $E$--completely regular (where $E$ is regular and subject to some restrictions) locally--${\mathscr P}$ space has a one--point $E$--completely regular extension having ${\mathscr P}$. (See \cite{Ma} for related results.) The authors then posed the following more general question: {\em For what pairs of topological properties ${\mathscr P}$ and ${\mathscr Q}$ is it true that every locally--${\mathscr P}$ space having ${\mathscr Q}$ has a one--point extension having both ${\mathscr P}$ and ${\mathscr Q}$?} Indeed, the systematic study of this sort of questions dates back to earlier times when P. Alexandroff proved that every locally compact non--compact Hausdorff space has a one--point compact Hausdorff extension (thus answering the question in the case when ${\mathscr P}$ is compactness and ${\mathscr Q}$ is the Hausdorff property). Since then the question has been considered by various authors for specific choices of topological properties ${\mathscr P}$ and ${\mathscr Q}$. In this note we provide an answer to the above old question of S. Mr\'{o}wka and J.H. Tsai. (See also Theorem 4.1 of \cite{MRW} for a related result.) Results of this note modify and simplify those we have proved in the final chapter of \cite{Kou}.

We now review some notation and terminologies. For undefined terms and notation we refer to \cite{E}.

Let $X$ be a space. If $f:X\rightarrow\mathbb{R}$ is continuous, denote $\mbox{Coz}(f)=X\backslash f^{-1}(0)$. Let
\[\mbox{Coz}(X)=\big\{\mbox{Coz}(f):f:X\rightarrow\mathbb{R}\mbox{ is continuous}\big\}.\]

Let $X$ be a completely regular space. The {\em Stone--\v{C}ech compactification} $\beta X$ of $X$ is the compactification of $X$ characterized among all compactifications of $X$ by the following property: Every continuous $f:X\rightarrow [0,1]$ is continuously extendable over $\beta X$; denote by $f_\beta$ this continuous extension of $f$.

\section{One--point ${\mathscr P}$--${\mathscr Q}$--extensions of locally--${\mathscr P}$ non--${\mathscr P}$ ${\mathscr Q}$--spaces}

 The following subspace of $\beta X$, introduced in \cite{Kou} (also in \cite{Ko4}), plays a crucial role.

\begin{definition}\label{RRA}
For a completely regular space $X$ and a topological property ${\mathscr P}$, let
\[\lambda_{\mathscr P} X=\bigcup\big\{\mbox{int}_{\beta X}\mbox{cl}_{\beta X}C:C\in\mbox{Coz}(X)\mbox{ and }\mbox{cl}_XC \mbox{ has }{\mathscr P}\big\}.\]
\end{definition}

\begin{remark}
If ${\mathscr P}$ is pseudocompactness then
\[\lambda_{\mathscr P} X=\mbox{int}_{\beta X}\upsilon X\]
where $\upsilon X$ is the Hewitt realcompactification of $X$. (See \cite{Ko4}, also \cite{Ko7}.)
\end{remark}

If $X$ is a space and $D$ is a dense subspace of $X$, then $\mbox{cl}_XU=\mbox{cl}_X(U\cap D)$ for every open subspace $U$ of $X$; we have the following simple observation.

\begin{lemma}\label{LKG}
Let $X$ be a completely regular space and let $f:X\rightarrow[0,1]$ be continuous. If $0<r<1$ then
\[f_\beta^{-1}\big[[0,r)\big]\subseteq\mbox{\em int}_{\beta X}\mbox{\em cl}_{\beta X}f^{-1}\big[[0,r)\big].\]
\end{lemma}

\begin{proof}
Note that
\[f_\beta^{-1}\big[[0,r)\big]\subseteq\mbox{cl}_{\beta X}f_\beta^{-1}\big[[0,r)\big]=\mbox{cl}_{\beta X}\big(X\cap f_\beta^{-1}\big[[0,r)\big]\big)=\mbox{cl}_{\beta X}f^{-1}\big[[0,r)\big].\]
\end{proof}

\begin{lemma}\label{BBV}
Let $X$ be a completely regular locally--${\mathscr P}$ space, where ${\mathscr P}$ is a closed hereditary topological property. Then $X\subseteq\lambda_{\mathscr P} X$.
\end{lemma}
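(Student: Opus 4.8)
The plan is to verify the defining condition of $\lambda_{\mathscr P}X$ pointwise: I would fix an arbitrary $x\in X$ and exhibit a single $C\in\mbox{Coz}(X)$ with $\mbox{cl}_XC$ having ${\mathscr P}$ and with $x\in\mbox{int}_{\beta X}\mbox{cl}_{\beta X}C$. Once such a $C$ is found for each $x$, the point $x$ lies in one of the sets whose union is $\lambda_{\mathscr P}X$, and letting $x$ range over $X$ gives the desired inclusion.

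First I would produce a good neighborhood. Since $X$ is completely regular (hence regular) and ${\mathscr P}$ is closed hereditary, the observation recorded in the introduction upgrades the locally--${\mathscr P}$ hypothesis: the point $x$ has an open neighborhood $U$ in $X$ with $\mbox{cl}_XU$ having ${\mathscr P}$. Complete regularity then supplies a continuous $f:X\rightarrow[0,1]$ with $f(x)=0$ and $f\equiv 1$ on $X\setminus U$.

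Next I would pin down the cozero set. Fixing any $r$ with $0<r<1$, I would set $C=f^{-1}\big[[0,r)\big]$. This is a cozero set of $X$ (it equals $\mbox{Coz}(g)$ for $g=\max\{r-f,0\}$); it contains $x$ since $f(x)=0<r$; and it is contained in $U$, because $y\notin U$ forces $f(y)=1\geq r$. Hence $\mbox{cl}_XC\subseteq\mbox{cl}_XU$, and since $\mbox{cl}_XU$ has ${\mathscr P}$ and ${\mathscr P}$ is closed hereditary, the closed subspace $\mbox{cl}_XC$ also has ${\mathscr P}$. Thus $C$ is admissible for the union defining $\lambda_{\mathscr P}X$.

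Finally I would place $x$ in the interior using Lemma \ref{LKG}, which yields $f_\beta^{-1}\big[[0,r)\big]\subseteq\mbox{int}_{\beta X}\mbox{cl}_{\beta X}C$. As $f_\beta$ extends $f$, we have $f_\beta(x)=0<r$, so $x\in f_\beta^{-1}\big[[0,r)\big]\subseteq\mbox{int}_{\beta X}\mbox{cl}_{\beta X}C\subseteq\lambda_{\mathscr P}X$, and the arbitrariness of $x$ gives $X\subseteq\lambda_{\mathscr P}X$. The argument is essentially routine once Lemma \ref{LKG} is in hand; the only point that needs care is the choice of $C$, which must simultaneously contain $x$, lie inside $U$ so that closed--heredity delivers ${\mathscr P}$, and be of the sublevel form $f^{-1}\big[[0,r)\big]$ so that Lemma \ref{LKG} is applicable—this is precisely what dictates the strict bounds $0<r<1$.
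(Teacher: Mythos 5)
Your proposal is correct and follows essentially the same route as the paper's proof: choose $U$ with $\mathrm{cl}_XU$ having ${\mathscr P}$, take $f$ separating $x$ from $X\setminus U$, set $C=f^{-1}[[0,r)]$ (the paper fixes $r=1/2$), and apply Lemma \ref{LKG}. The only difference is the cosmetic generality of the parameter $r$.
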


\begin{proof}
Let $x\in X$ and let $U$ be an open neighborhood of $x$ in $X$ whose closure $\mbox{cl}_XU$ has ${\mathscr P}$. Let $f:X\rightarrow[0,1]$ be continuous with $f(x)=0$ and $f|(X\backslash U)\equiv 1$. Let $C=f^{-1}[[0,1/2)]\in\mbox{Coz}(X)$. Then $C\subseteq U$ and thus $\mbox{cl}_XC$ has ${\mathscr P}$, as it is  closed in $\mbox{cl}_XU$. Therefore $\mbox{int}_{\beta X}\mbox{cl}_{\beta X}C\subseteq\lambda_{\mathscr P} X$. But then $x\in\lambda_{\mathscr P} X$, as $x\in f_\beta^{-1}[[0,1/2)]$ and  $f_\beta^{-1}[[0,1/2)]\subseteq\mbox{int}_{\beta X}\mbox{cl}_{\beta X}C$ by Lemma \ref{LKG}.
\end{proof}

\begin{theorem}\label{HGAB}
Let ${\mathscr P}$ be a closed hereditary topological property preserved under finite closed sums and satisfying Mr\'{o}wka's condition $(\mbox{\em W})$. Let ${\mathscr Q}$ be a closed hereditary topological property satisfying Mr\'{o}wka's condition $(\mbox{\em W})$ and implying complete regularity. If $X$ is a locally--${\mathscr P}$ non--${\mathscr P}$ space having ${\mathscr Q}$ then $X$ has a one--point extension having both ${\mathscr P}$ and ${\mathscr Q}$.
\end{theorem}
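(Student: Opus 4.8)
The plan is to build the required one-point extension as a subspace of a suitable compact Hausdorff quotient of $\beta X$. Since $\mathscr{Q}$ implies complete regularity, $X$ is completely regular and $\beta X$ is available; set $T=\beta X\setminus\lambda_{\mathscr P}X$. Because $\lambda_{\mathscr P}X$ is open in $\beta X$, the set $T$ is compact. I would first check that $T$ is non--empty: if it were empty, then the sets $\mbox{int}_{\beta X}\mbox{cl}_{\beta X}C$, taken over those $C\in\mbox{Coz}(X)$ with $\mbox{cl}_XC$ having $\mathscr{P}$, would form an open cover of the compact space $\beta X$; a finite subcover would give $\beta X=\mbox{cl}_{\beta X}C_1\cup\cdots\cup\mbox{cl}_{\beta X}C_n$, whence, intersecting with $X$ and using $\mbox{cl}_{\beta X}C_i\cap X=\mbox{cl}_XC_i$, we would obtain $X=\mbox{cl}_XC_1\cup\cdots\cup\mbox{cl}_XC_n$, a finite closed sum of subspaces having $\mathscr{P}$. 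Preservation under finite closed sums would then force $X$ to have $\mathscr{P}$, contradicting the hypothesis. So $T\neq\emptyset$.

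Next I would form the extension. Collapsing the closed set $T$ to a point in the compact Hausdorff space $\beta X$ yields, via the closed equivalence relation $\Delta_{\beta X}\cup(T\times T)$, a compact Hausdorff quotient $\beta X/T$; write $\pi$ for the quotient map and $p=\pi(T)$. Since $X\subseteq\lambda_{\mathscr P}X=\beta X\setminus T$ by Lemma \ref{BBV}, the map $\pi$ restricts to a homeomorphic embedding of $X$ (the restriction of $\pi$ to the open saturated set $\beta X\setminus T$ is a homeomorphism onto its image), and $Y=\pi(X)\cup\{p\}$, being a subspace of the compact Hausdorff space $\beta X/T$, is completely regular. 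Density of $X$ in $\beta X$ makes $\pi(X)$ dense in $\beta X/T$, hence in $Y$, so $Y$ is a one--point completely regular extension of $X$ with $Y\setminus X=\{p\}$. A direct computation shows that the open neighborhoods of $p$ in $Y$ are exactly the sets $U_K=\{p\}\cup(X\setminus K)$, where $K$ ranges over the compact subsets of $\lambda_{\mathscr P}X$ (taking $K=\beta X\setminus V$ for open $V\supseteq T$); note that $Y\setminus U_K=X\cap K$, which is closed in $X$.

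The crux is to verify that $X\cap K$ has $\mathscr{P}$ for every compact $K\subseteq\lambda_{\mathscr P}X$. Given such a $K$, I would cover it by finitely many basic sets $\mbox{int}_{\beta X}\mbox{cl}_{\beta X}C_i$ with $\mbox{cl}_XC_i$ having $\mathscr{P}$, so that $X\cap K\subseteq\mbox{cl}_XC_1\cup\cdots\cup\mbox{cl}_XC_n$. The latter union has $\mathscr{P}$ by preservation under finite closed sums, and $X\cap K$ is a closed subspace of it, so $X\cap K$ has $\mathscr{P}$ by closed heredity. Thus $\{U_K\}$ is an open base at $p$ for which $Y\setminus U_K$ has $\mathscr{P}$ for every member, and Mr\'{o}wka's condition $(\mbox{W})$ for $\mathscr{P}$ (applicable since $Y$ is completely regular) yields that $Y$ has $\mathscr{P}$.

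Finally, the same base handles $\mathscr{Q}$: since $X$ has $\mathscr{Q}$ and $\mathscr{Q}$ is closed hereditary, each closed subspace $Y\setminus U_K=X\cap K$ has $\mathscr{Q}$, so Mr\'{o}wka's condition $(\mbox{W})$ for $\mathscr{Q}$ gives that $Y$ has $\mathscr{Q}$ as well. The main obstacle is not in these verifications, which become routine once the framework is in place, but in choosing the right ambient space: the whole argument hinges on realizing $Y$ inside the compact Hausdorff quotient $\beta X/T$, which simultaneously guarantees the complete regularity of $Y$ needed to invoke $(\mbox{W})$ and makes the neighborhood filter of $p$ precisely traceable to the compact subsets of $\lambda_{\mathscr P}X$.
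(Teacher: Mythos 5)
Your proposal is correct and follows essentially the same route as the paper: the same non-emptiness argument for $\beta X\setminus\lambda_{\mathscr P}X$ via compactness and preservation under finite closed sums, the same quotient of $\beta X$ collapsing that remainder to a point $p$, the same use of Lemma \ref{BBV} to place $X$ inside $\lambda_{\mathscr P}X$, and the same verification of Mr\'{o}wka's condition $(\mbox{W})$ by covering the compact trace of a neighborhood complement with finitely many sets $\mbox{int}_{\beta X}\mbox{cl}_{\beta X}C_i$. The only cosmetic difference is that you parametrize the neighborhoods of $p$ by compact subsets $K\subseteq\lambda_{\mathscr P}X$ rather than by preimages $q^{-1}[T\setminus V']$ as the paper does.
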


\begin{proof}
Let $X$ be a locally--${\mathscr P}$ non--${\mathscr P}$ space having ${\mathscr Q}$. Note that $\lambda_{\mathscr P} X\neq\beta X$; as otherwise, by compactness and the definition of $\lambda_{\mathscr P} X$ we have
\begin{equation}\label{FRTT}
\beta X=\mbox{int}_{\beta X}\mbox{cl}_{\beta X}C_1\cup\cdots\cup\mbox{int}_{\beta X}\mbox{cl}_{\beta X}C_n
\end{equation}
where $C_1,\ldots,C_n\in\mbox{Coz}(X)$ and each $\mbox{cl}_XC_1,\ldots,\mbox{cl}_XC_n$ has ${\mathscr P}$. Taking the intersection of both sides of (\ref{FRTT}) with $X$, we then have
\[X=\mbox{cl}_XC_1\cup\cdots\cup\mbox{cl}_XC_n.\]
This implies that $X$ has ${\mathscr P}$, as it is the finite union of closed subspaces each having ${\mathscr P}$. This is a contradiction. Note that $\lambda_{\mathscr P} X$ is open in $\beta X$ by its definition, and $X\subseteq\lambda_{\mathscr P} X$ by Lemma \ref{BBV}, as $X$ is locally--${\mathscr P}$. Let $T$ be the quotient space of $\beta X$ obtained by contracting the non--empty set $\beta X\backslash\lambda_{\mathscr P} X$ to a point $p$ and denote by $q:\beta X\rightarrow T$ its quotient mapping. Note that $T$ is Hausdorff, as $\beta X\backslash\lambda_{\mathscr P} X$ is closed in the normal space $\beta X$. Since $T$ is also compact, as it is the continuous image of $\beta X$, it is then completely regular. Also, note that $T$ contains $X$ as a dense subspace. Consider the subspace $Y=X\cup\{p\}$ of $T$. Then $Y$ is a completely regular one--point extension of $X$. We need to show that $Y$ has both ${\mathscr P}$ and ${\mathscr Q}$. To show this, since ${\mathscr P}$ and ${\mathscr Q}$ both satisfy Mr\'{o}wka's condition $(\mbox{W})$ it suffices to show that $Y\backslash V$ has ${\mathscr P}$ and ${\mathscr Q}$ for every open neighborhood $V$ of $p$ in $Y$. Let $V$ be an open neighborhood of $p$ in $Y$. Let $V'$ be open in $T$ such that $V=Y\cap V'$. Note that
\[Y\backslash V=X\cap(T\backslash V')=X\cap q^{-1}[T\backslash V'].\]
Since $p\in V'$ we have
\[q^{-1}[T\backslash V']\cap(\beta X\backslash\lambda_{\mathscr P} X)=q^{-1}[T\backslash V']\cap q^{-1}(p)=\emptyset\]
and thus $q^{-1}[T\backslash V']\subseteq\lambda_{\mathscr P} X$. Since $q^{-1}[T\backslash V']$ is compact, as it is closed in $\beta X$, we have
\begin{equation}\label{FT}
q^{-1}[T\backslash V']\subseteq\mbox{int}_{\beta X}\mbox{cl}_{\beta X}D_1\cup\cdots\cup\mbox{int}_{\beta X}\mbox{cl}_{\beta X}D_m
\end{equation}
for some $D_1,\ldots,D_m\in\mbox{Coz}(X)$ such that each $\mbox{cl}_XD_1,\ldots,\mbox{cl}_XD_m$ has ${\mathscr P}$. Taking the intersection of both sides of (\ref{FT}) with $X$, we have
\[Y\backslash V\subseteq\mbox{cl}_XD_1\cup\cdots\cup\mbox{cl}_XD_m=H.\]
But $H$ has ${\mathscr P}$, as it is the finite union of closed subspaces each having ${\mathscr P}$. Therefore $Y\backslash V$ has ${\mathscr P}$, as it is closed in $H$. That $Y\backslash V$ has ${\mathscr Q}$ follows, as $Y\backslash V$ is closed in $X$ and $X$ has ${\mathscr Q}$.
\end{proof}

\begin{example}
The list of topological properties satisfying the assumption of Theorem \ref{HGAB} is quite long and include almost all important covering properties (i.e., topological properties described in terms of the existence of certain kinds of open subcovers or refinements of a given open cover of a certain type), among them are: compactness, countable compactness (more generally, $[\theta,\kappa]$--compactness), the Lindel\"{o}f property (more generally, the $\mu$--Lindel\"{o}f property), paracompactness, metacompactness, countable paracompactness, subparacompactness, submetacompactness (or $\theta$--refinability) and the $\sigma$--para--Lindel\"{o}f property. (See \cite{Bu}, also \cite{Steph}, for definitions. For the proof that these all satisfy Mr\'{o}wka's condition $(\mbox{W})$, see \cite{Kou}. That these topological properties are closed hereditary and preserved under finite closed sums, follow from Theorems 7.1, 7.3 and 7.4 of \cite{Bu}.)
\end{example}

\section{Acknowledgements}

The author wishes to thank the anonymous referee for reading the manuscript and the prompt report given within two weeks.

\bibliographystyle{amsplain}

\end{document}